\renewcommand{\sqrt}[1]{\left( #1 \right)^\frac12}
\newcommand{\e}[0]{\mathrm{e}}
\newcommand{\E}[0]{\mathbb{E}}
\newcommand{\T}[0]{\mathbb{T}}
\newcommand{\D}[0]{\mathbb{D}}
\renewcommand{\d}[0]{\mathrm{d}}
\newcommand{\indnorm}[1]{\left\|#1\right\|_{\mathrm{ind}}}
\newcommand{\Lnorm}[1]{\left\|#1\right\|_{L^1\left(\ell^2\right)}}
\newcommand{\p}[1]{\left(#1\right)}
\newcommand{\beq}{\begin{equation}}
\newcommand{\eeq}{\end{equation}}
\newcommand{\supp}[0]{\mathrm{supp}\ }
\author{Maciej Rzeszut}
\address{Jan Kochanowski University in Kielce}
\email{maciej.rzeszut@gmail.com}
\title{New examples of Fourier multipliers on $H^1\p{\mathbb{D}^2}$ revisited }
\newtheorem{thm}{Theorem}
\newtheorem{lem}[thm]{Lemma}
\newtheorem{deff}[thm]{Definition}
\newtheorem{cor}[thm]{Corollary}
\numberwithin{thm}{section}
\begin{document}

\begin{abstract}
We show yet another family of examples of idempotent Fourier multipliers on $H^1\p{\mathbb{D}^2}$. The proof differs from the old result \cite{RzW} and gets rid of arithmetical assumptions.
\end{abstract}

\maketitle
\section{Introduction}
We will identify the elements of the Hardy space $H^1\p{\D^2}$ with their limits on $\T^2$, i.e. with elements of the space
\beq \overline{\mathrm{span}}\left\{\e^{2\pi i \langle n,t\rangle}:n_1,n_2\geq 0\right\}.\eeq
On the one-variable Hardy space $H^1\p{\D}$, the problem of classifying bounded operators $T$ such that 
\beq \label{eq:ble}\widehat{Tf}=\mathbbm{1}_A \widehat{f}\eeq
for some set $A$ (called idempotent Fourier multipliers) has been solved completely \cite{K}. The analogous question for $H^1\p{\D^2}$ remains open. \par
In \cite{RzW}, we proposed the following method of constructing idempotent Fourier multipliers on $H^1\p{\D^2}$. Take sequences $d_k$ and $N_k$ of natural numbers such that $\frac{d_k}{N_{k+1}}$ is bounded, $\frac{d_k}{N_k}\to\infty$, $N_k<N_{k+1}$ and $N_k\mid N_{k+1}$. Then the set
\beq \bigcup_k \left\{\p{n_1,n_2}: n_1+n_2=d_k,\ N_k\mid n_1\right\}\eeq
can be taken as $A$ in \eqref{eq:ble}. Here, we are going to present a different proof that does not need the divisibility assumption. \par
The reduction of a two-parameter scalar-valued inequality to a one-parameter vector-valued one was done in \cite{RzW} as follows. First, by means of tensoring a Payley projection associated with the lacunary sequence $d_k$ with identity, we can reduce the problem to functions consisting of characters of the form $\p{n_1,n_2}$, where $n_1+n_2=d_k$. Then, our multiplier acts on the $k$-th generation of functions as a conditional expectation, which reduces the problem to Theorem 2.11
\section{Main result}
\begin{deff}For a sequence $\left(f_k:k\geq 1\right)$ of functions in $L^1\left(\Omega,\mathcal{F},\mu,\right)$ we define a norm \beq \indnorm{\left(f_k:k\geq 1\right)}=\int_{\Omega^\infty} \sqrt{\sum_{k=1}^\infty\left|f_k\left(\omega_k\right)\right|^2}\d \mu^{\otimes \infty}( \omega)\eeq and the space $\left(\bigoplus_{k=1}^\infty L^1\right)_{\mathrm{ind}}$ of such seqences for which this norm is finite.\end{deff}
\begin{deff}A dyadic atom is a function $a\in L^2[0,1]$ of mean $0$, supported on a dyadic interval $I$ such that $\|a\|_{L^2}\leq |I|^{-\frac12}$. \end{deff}

\begin{thm}[\cite{coif}]\label{atdec}If $f\in H^1(\delta)$, then there exists a sequence of atoms $\left(a_k:k\geq 1\right)$ and scalars $\left(c_k:k\geq 1\right)$ such that \beq f-\E f= \sum_{k=1}^\infty c_k a_k,\quad \sum_{k=1}^\infty \left|c_k\right|\lesssim \|f\|_{H^1(\delta)}.\eeq \end{thm}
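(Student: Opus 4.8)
The plan is to build the atoms by a Calderón--Zygmund / stopping-time decomposition of the dyadic martingale $\p{E_n f:n\geq 0}$ at the dyadic heights $2^j$, measuring everything through the dyadic square function $Sf=\p{\sum_n|d_nf|^2}^{1/2}$, where $d_nf=E_nf-E_{n-1}f$. The first step is to recall the equivalence $\|f\|_{H^1(\delta)}\approx\|Sf\|_{L^1}$, so that it suffices to bound the total mass of the coefficients by $\|Sf\|_{L^1}$. Since the atoms here are only required to satisfy the $L^2$ normalization $\|a\|_{L^2}\leq|I|^{-1/2}$, the square function is the natural quantity to work with, and the whole construction can be carried out through $L^2$ orthogonality rather than through pointwise $L^\infty$ bounds.

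For each $j\in\mathbb{Z}$ I would introduce the stopping time $\sigma_j=\inf\{n:S_nf>2^j\}$, where $S_nf=\p{\sum_{m\leq n}|d_mf|^2}^{1/2}$ is $\mathcal{F}_n$-measurable and increases to $Sf$. The level set $\Omega_j=\{\sigma_j<\infty\}=\{Sf>2^j\}$ is then a disjoint union of maximal dyadic intervals $Q_j^k$, on each of which $\sigma_j$ equals the generation of $Q_j^k$. The differences are regrouped as $g^{(j)}=\sum_n d_nf\,\mathbbm{1}_{\{2^j<S_{n-1}f\leq 2^{j+1}\}}$; because the indicator is $\mathcal{F}_{n-1}$-measurable, each summand is again a martingale difference, so $g^{(j)}$ is a martingale, $\sum_j g^{(j)}=f-\E f$ (up to the first difference, which I treat as a single atom), and each $d_nf$ lands in exactly one block. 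Localizing, $g_{j,k}:=g^{(j)}\mathbbm{1}_{Q_j^k}$ is supported on $Q_j^k$ and has mean zero there, since $g^{(j)}$ is a martingale vanishing at the generation of $Q_j^k$; these, suitably normalized, are the candidate atoms.

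It then remains to estimate $\|g_{j,k}\|_{L^2}$. By orthogonality of martingale differences, $\|g_{j,k}\|_{L^2}^2=\int_{Q_j^k}\sum_{2^j<S_{n-1}f\leq 2^{j+1}}|d_nf|^2$, and the bulk of this sum is controlled by $\p{2^{j+1}}^2$ pointwise, giving $\|g_{j,k}\|_{L^2}\lesssim 2^j|Q_j^k|^{1/2}$. Setting $a_{j,k}=g_{j,k}/\p{\|g_{j,k}\|_{L^2}|Q_j^k|^{1/2}}$ and $c_{j,k}=\|g_{j,k}\|_{L^2}|Q_j^k|^{1/2}$ then yields genuine atoms with $c_{j,k}\lesssim 2^j|Q_j^k|$, whence $\sum_{j,k}c_{j,k}\lesssim\sum_j 2^j|\Omega_j|=\sum_j 2^j|\{Sf>2^j\}|\approx\|Sf\|_{L^1}\approx\|f\|_{H^1(\delta)}$ by the layer-cake formula, which is exactly the desired bound.

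The main obstacle is hidden in the phrase ``the bulk of the sum is controlled by $\p{2^{j+1}}^2$'': the single difference at the generation where $S_nf$ first overshoots $2^{j+1}$ is not bounded by the stopping level, since a dyadic martingale may have arbitrarily large jumps, and the naive $L^2$ estimate can therefore fail. I expect to remove it with a Davis-type decomposition $f-\E f=g+h$ in which $g$ has predictably bounded differences, $|d_ng|\leq 2(df)^*_{n-1}$, while $h$ satisfies $\big\|\sum_n|d_nh|\big\|_{L^1}\lesssim\|(df)^*\|_{L^1}\leq\|Sf\|_{L^1}$. For $h$ the decomposition is immediate: each $d_nh$ restricted to a dyadic interval of generation $n-1$ is mean zero there and, after normalization, an $L^\infty$- (hence $L^2$-) atom, the coefficients summing to exactly $\big\|\sum_n|d_nh|\big\|_{L^1}$. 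For $g$ I would run the construction above but reading the stopping times off $S_nf$; then at the overshoot generation $|d_ng|\leq 2(df)^*_{n-1}\leq 2S_{n-1}f\leq 2^{j+2}$, so the block's square function exceeds the stopping level only by a bounded factor and the estimate $\|g_{j,k}\|_{L^2}\lesssim 2^j|Q_j^k|^{1/2}$ survives. Summing the two families of atoms completes the argument.
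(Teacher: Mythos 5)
The paper offers no proof of Theorem \ref{atdec} to compare against --- it is quoted from Coifman \cite{coif} --- so your argument stands on its own. What you propose is essentially the standard stopping-time proof of the atomic decomposition of martingale $H^1$, and its architecture is sound: your diagnosis that the naive block estimate fails at the overshoot generation is correct, and a Davis-type splitting $f-\E f=g+h$ does repair it. One claim, however, is genuinely false as written: the part of $f$ missed by your blocks, namely $\sum_n d_nf\,\mathbbm{1}_{\{S_{n-1}f=0\}}$, cannot be ``treated as a single atom''. Take $f=N(h_{I_1}+h_{I_2})$, two Haar-type differences of the same generation on dyadic intervals of length $2^{-n}$, one near $0$ and one near $1$; then the leftover is all of $f$, the only dyadic interval containing its support is $[0,1]$, and the coefficient of the corresponding normalized atom is $\|f\|_{L^2}\simeq N2^{-n/2}$, which exceeds $\|Sf\|_{L^1}\simeq N2^{-n}$ by a factor $2^{n/2}$. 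The gap is repairable, and in fact your own Davis step repairs it silently: wherever $S_{n-1}f=(df)^*_{n-1}=0$ and $d_nf\neq 0$, the jump criterion $|d_nf|>2(df)^*_{n-1}$ holds, so all such differences land in $h$, while the corresponding differences of $g$ vanish identically. But this needs to be said explicitly, since ``the construction above'', which you then run for $g$, contains the false step.

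Second, you are working harder than the dyadic setting requires, and at the same time your treatment of $h$ secretly uses the dyadic structure. On the dyadic filtration each $d_nf$ takes values $\pm a$ on the two halves of its parent interval, so $|d_nf|$ is $\mathcal{F}_{n-1}$-measurable, and hence $S_nf$ is $\mathcal{F}_{n-1}$-measurable. You may therefore define the blocks with the indicator $\mathbbm{1}_{\{2^j<S_nf\leq 2^{j+1}\}}$ (with $S_n$, not $S_{n-1}$): each summand is still a martingale difference, every nonzero $d_nf$ falls into exactly one block (no leftover), and the block square function never exceeds $2^{j+1}$ (no overshoot) --- so the Davis decomposition becomes unnecessary and the whole proof fits on a page. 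Conversely, your claim that the coefficients of the normalized pieces $d_nh\mathbbm{1}_{I}$ sum to \emph{exactly} $\bigl\|\sum_n|d_nh|\bigr\|_{L^1}$ is not automatic: Cauchy--Schwarz bounds the coefficient $\|d_nh\mathbbm{1}_I\|_{L^2}|I|^{1/2}$ from \emph{below}, not above, by $\|d_nh\mathbbm{1}_I\|_{L^1}$, and equality requires $|d_nh|$ to be constant on $I$ --- which is precisely the predictability of $|d_nf|$ just described. (A smaller point: $\sigma_j$ need not equal the generation of $Q_j^k$ on $Q_j^k$; the mean-zero property of $g_{j,k}$ follows instead from the fact that each set $\{2^j<S_{n-1}f\leq 2^{j+1}\}\cap Q_j^k$ is a union of dyadic intervals of generation $n-1$, by maximality of $Q_j^k$.) With these repairs your proof is correct; acknowledging the dyadic predictability up front is what collapses it to the short argument.
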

\begin{cor}\label{atext}A bounded linear (sublinear) operator $T:H^1_0(\delta)\to X$, where $X$ is a Banach space (a Banach lattice), such that $\|Ta\|_X\leq C$ for any atom $a$, satisfies $\left\|T:H^1(\delta)\to X\right\|\lesssim C$. \end{cor}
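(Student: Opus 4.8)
The plan is to feed the atomic decomposition of Theorem~\ref{atdec} straight into the hypothesis. First I would dispose of the constant term: for $f\in H^1(\delta)$ write $f=\E f+(f-\E f)$, note that $f-\E f\in H^1_0(\delta)$ with $\|f-\E f\|_{H^1(\delta)}\lesssim\|f\|_{H^1(\delta)}$ (the $H^1$ norm controls the mean-zero part), and extend $T$ to all of $H^1(\delta)$ by setting $Tf:=T(f-\E f)$. It then suffices to bound $\|Tg\|_X$ for $g\in H^1_0(\delta)$ by $C\|g\|_{H^1(\delta)}$.

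For such a $g$, apply Theorem~\ref{atdec} to obtain atoms $a_k$ and scalars $c_k$ with $g=\sum_{k\geq1}c_ka_k$ and $\sum_k|c_k|\lesssim\|g\|_{H^1(\delta)}$. The key auxiliary fact is that dyadic atoms have uniformly bounded $H^1(\delta)$ norm: for an atom $a$ supported on $I$ with $\|a\|_{L^2}\leq|I|^{-1/2}$, the square (or maximal) function estimate and Cauchy--Schwarz give $\|a\|_{H^1(\delta)}\lesssim\|a\|_{L^2}|I|^{1/2}\leq1$. Consequently the partial sums $S_N=\sum_{k=1}^N c_ka_k$ are Cauchy in $H^1(\delta)$, since $\|S_N-S_M\|_{H^1(\delta)}\lesssim\sum_{k=M+1}^N|c_k|\to0$, and $S_N\to g$ in $H^1(\delta)$. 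In the linear case, boundedness of $T$ gives $TS_N\to Tg$ in $X$, while linearity and the triangle inequality yield $\|TS_N\|_X\leq\sum_{k=1}^N|c_k|\,\|Ta_k\|_X\leq C\sum_{k=1}^N|c_k|$; letting $N\to\infty$ produces $\|Tg\|_X\leq C\sum_k|c_k|\lesssim C\|g\|_{H^1(\delta)}$.

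For a sublinear $T$ into a Banach lattice $X$ the only change is that linearity and the triangle inequality are replaced by subadditivity and homogeneity, which give the lattice inequality $|TS_N|\leq\sum_{k=1}^N|c_k|\,|Ta_k|$; monotonicity of the lattice norm then reproduces the same estimate $\|TS_N\|_X\leq C\sum_{k=1}^N|c_k|$, and one passes to the limit as before. The main obstacle I anticipate is exactly the justification of the interchange $T\bigl(\lim_N S_N\bigr)=\lim_N TS_N$: it rests on the continuity of $T$ together with the $H^1(\delta)$-convergence of the atomic series, and hence on the uniform atom bound above. In the sublinear setting one must additionally check that convergence in $X$ is compatible with the lattice order used in the partial-sum estimate, but since the bound survives the limit this is a routine verification rather than a genuine difficulty.
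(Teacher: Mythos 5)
Your proof is correct and follows essentially the same route as the paper: feed the atomic decomposition of Theorem \ref{atdec} into the hypothesis, apply $T$ termwise by continuity (resp.\ the lattice inequality $|Tf|\leq\sum_k|c_k\,Ta_k|$ in the sublinear case), and sum using $\|Ta_k\|_X\leq C$ and $\sum_k|c_k|\lesssim\|f\|_{H^1(\delta)}$. The extra details you supply---the uniform bound $\|a\|_{H^1(\delta)}\leq 1$ for atoms justifying $H^1$-convergence of the partial sums, and the explicit treatment of $\E f$---merely spell out what the paper's one-line appeal to continuity of $T$ leaves implicit.
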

\begin{proof}Let $f=\sum_{k=1}^\infty c_k a_k$ be the decomposition given by Theorem \ref{atdec}. By continuity of $T$, $Tf=\sum_{k=1}^\infty c_k T a_k$ (or $\left|Tf\right|\leq \sum_{k=1}^\infty \left|c_k T a_k\right|$). Thus $\|Tf\|_X\leq \sum_{k=1}^\infty \left|c_k\right|\cdot \left\|Ta_k\right\|_X\lesssim C\|f\|_{H^1(\delta)}$. \end{proof} Care has to be taken, as this definition of an atom (precisely an (1,2)-atom) differs from the more widely used $(1,\infty)$-atoms satisfying $\|a\|_{L^\infty}\leq |I|^{-1}$. Corollary \ref{atext} is nontrivial if we drop the a priori boundedness of $T$ and false if we additionally replace $(1,2)$-atoms with $(1,\infty)$ ones (see \cite{bownik},\cite{msv}). 
\begin{thm}\label{dyadicrbdd}Let $\left(\mathcal{F}_n:n\geq 0\right)$ be the dyadic filtration on $[0,1]$, $\left(m_k:k\geq 1\right)$ be an increasing sequence of integers and $s\geq 0$ be an integer. Suppose we are given a sequence of sublinear operators $T_k$ acting on $\mathcal{F}_{m_k}$-measurabe functions such that \beq \left\|T_k:L^1\left([0,1],\mathcal{F}_{m_k}\right)\to L^1[0,1]\right\|\leq C_1\eeq and \beq \label{eq:tkl2}\left\|T_k f\right\|_{L^2}\leq C_2 |I|^{\frac{1}{2}}\left\|f\right\|_{L^2} \eeq whenever the function $f$ is supported on a dyadic interval $I$ of length $2^{-m}$ and there exists $j$ such that $m\leq m_{j}$ and $k\geq j+s$. Then for any sequence of $\mathcal{F}_{m_k}$-measurable functions $f_k$ we have \beq \label{eq:rbddmain} \indnorm{\left(T_kf_k:k\geq 1\right)}\lesssim \left(C_1 s^{\frac12}+ C_2\right) \Lnorm{\left(f_k:k\geq 1\right)}.\eeq \end{thm}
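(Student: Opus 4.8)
The plan is to prove the estimate by reducing, via the atomic decomposition, to a single elementary building block and then exploiting two crude bounds for the $\indnorm{\cdot}$ norm. For any sequence $\p{g_k}$ one has the block--triangle inequality $\indnorm{\p{g_k}}\le\indnorm{\p{g_k}_{k\in S}}+\indnorm{\p{g_k}_{k\notin S}}$, together with the two endpoint estimates $\indnorm{\p{g_k}}\le\p{\sum_k\|g_k\|_{L^2}^2}^{1/2}$ (Cauchy--Schwarz on the probability space $\Omega^\infty$, using independence) and $\indnorm{\p{g_k}}\le\sum_k\|g_k\|_{L^1}$ (from $\ell^2\le\ell^1$). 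Applying the $\ell^2$-valued analogue of Corollary \ref{atext} with the Banach lattice $X=\p{\bigoplus_{k=1}^\infty L^1}_{\mathrm{ind}}$, it is enough to bound $\indnorm{\p{T_kf_k}}$ by $C_1s^{1/2}+C_2$ when $\p{f_k}$ is a single $\ell^2$-valued atom: each $f_k$ is $\mathcal F_{m_k}$-measurable and supported on one dyadic interval $Q$ of length $2^{-M}$, $\int_Q f_k=0$, and $\|\p{f_k}\|_{L^2(\ell^2)}=\p{\sum_k\|f_k\|_{L^2}^2}^{1/2}\le|Q|^{-1/2}$.

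The decisive point is a clean splitting of the index $k$ according to how the scale $M$ of $Q$ sits among the measurability scales $m_k$. Let $k_1=\min\{k:m_k\ge M\}$. If $k<k_1$ then $m_k<M$, so $f_k$ is constant on $Q$; since $\int_Q f_k=0$ this forces $f_k=0$. Thus only $k\ge k_1$ survive, and I split these into the far range $k\ge k_1+s$ and the near range $k_1\le k<k_1+s$, the latter containing at most $s$ indices. For far $k$ the localized bound \eqref{eq:tkl2} applies to $f_k$ (supported on $Q$, with the choice $j=k_1$: $M\le m_{k_1}$ and $k\ge k_1+s=j+s$), giving $\|T_kf_k\|_{L^2}\le C_2|Q|^{1/2}\|f_k\|_{L^2}$; for near $k$ only the $L^1$ bound is available.

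For the far part I use the $\ell^2(L^2)$ estimate: $\indnorm{\p{T_kf_k}_{k\ge k_1+s}}\le\p{\sum_k\|T_kf_k\|_{L^2}^2}^{1/2}\le C_2|Q|^{1/2}\p{\sum_k\|f_k\|_{L^2}^2}^{1/2}\le C_2|Q|^{1/2}|Q|^{-1/2}=C_2$, with no overlap problem because each $f_k$ lives on the single interval $Q$. For the near part, which has at most $s$ terms, I use the $\ell^1(L^1)$ estimate together with two applications of Cauchy--Schwarz: $\indnorm{\p{T_kf_k}_{k_1\le k<k_1+s}}\le C_1\sum_k\|f_k\|_{L^1}\le C_1|Q|^{1/2}\sum_k\|f_k\|_{L^2}\le C_1|Q|^{1/2}s^{1/2}\p{\sum_k\|f_k\|_{L^2}^2}^{1/2}\le C_1s^{1/2}$, using $\|f_k\|_{L^1}\le|Q|^{1/2}\|f_k\|_{L^2}$ on $Q$ and then summing the at most $s$ surviving indices in $\ell^2$. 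Combining the two ranges by the block--triangle inequality yields the per-atom bound $C_1s^{1/2}+C_2$.

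The step I expect to cost the most care is the reduction to atoms itself, for two reasons. First, one must set up the $\ell^2$-valued dyadic atomic decomposition and check that Corollary \ref{atext} survives in the lattice-valued, sublinear form with the independentified target; the sublinearity of the $T_k$ and of $\indnorm{\cdot}$ should make this routine, but it needs to be stated carefully. Second, and more essentially, the whole mechanism hinges on the cancellation $\int_Q f_k=0$: it is exactly this that annihilates the coarse indices $k<k_1$ and confines the uncontrolled, $L^1$-only contribution to the $s$ ``near'' generations, so that Cauchy--Schwarz produces $s^{1/2}$ rather than a power of the (unbounded) number of scales. Getting the threshold $k_1$ and the far/near split aligned with both the shift $s$ and the filtration scales $m_k$ is the crux; once that bookkeeping is fixed, the two endpoint estimates close the argument immediately.
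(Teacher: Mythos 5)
Your per-atom computation is exactly the paper's: the same threshold $k_1=\min\{k:m_k\ge M\}$, the same split into at most $s$ ``near'' indices handled by the $L^1\to L^1$ bound plus Cauchy--Schwarz (giving $C_1s^{1/2}$) and ``far'' indices $k\ge k_1+s$ handled by \eqref{eq:tkl2} in $\ell^2(L^2)$ (giving $C_2$). The gap is in the step you deferred, the reduction to atoms, and it is not a technicality: the ``$\ell^2$-valued analogue of Corollary \ref{atext}'' you invoke --- decomposition of an arbitrary admissible sequence $\left(f_k\right)$ into atoms supported on a common dyadic interval, $L^2\left(\ell^2\right)$-normalized, with \emph{componentwise mean zero}, and with $\sum_i\left|c_i\right|\lesssim\Lnorm{\left(f_k\right)}$ --- is false, not merely unproven. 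The norm on the right is only an $L^1$-type norm, and it cannot control atomic coefficients. Concretely, take $f_1=2^N\mathbbm{1}_{[0,2^{-N}]}-1$ with $m_1\ge N$ and $f_k=0$ for $k\ge2$. Then $\Lnorm{\left(f_k\right)}=\left\|f_1\right\|_{L^1}\le2$, but each component $a_1^{(i)}$ of one of your atoms is itself a mean-zero dyadic $(1,2)$-atom, hence has $H^1(\delta)$-norm at most $1$; so any decomposition $f_1=\sum_i c_i a_1^{(i)}$ would force $\left\|f_1\right\|_{H^1(\delta)}\le\sum_i\left|c_i\right|\lesssim2$, whereas in fact $\left\|f_1\right\|_{H^1(\delta)}\simeq N$. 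No such decomposition exists, and subtracting componentwise averages cannot repair this: the obstruction is the mismatch of norms, not the means.

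What makes the reduction work in the paper --- and this is the idea missing from your outline --- is the Rademacher scalarization $f=\sum_k r_{m_k+1}f_k$. Since $f_k$ is $\mathcal{F}_{m_k}$-measurable, $r_{m_k+1}f_k$ is a genuine martingale difference at level $m_k+1$, so the dyadic square function of $f$ is exactly $\sqrt{\sum_k\left|f_k\right|^2}$ and $\left\|f\right\|_{H^1(\delta)}=\Lnorm{\left(f_k\right)}$ on the nose: the internal oscillation of each $f_k$ (precisely what kills your counterexample above) is invisible to the square function. Now the \emph{scalar} Coifman decomposition (Theorem \ref{atdec}) applies to $f$, and a scalar atom $a$ on $I$, $|I|=2^{-m}$, projects back to the sequence $a_k=r_{m_k+1}\Delta_{m_k+1}a$, whose components are supported on $I$ and vanish for $m_k+1\le m$ (because $\Delta_{m_k+1}a=\Delta_{m_k+1}\E_m a=0$), but do \emph{not} have mean zero: e.g.\ $a=|I|^{-1}r_{m_k+1}\mathbbm{1}_I$ projects to $a_k=|I|^{-1}\mathbbm{1}_I$. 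Your near/far argument only uses common support plus coarse-scale vanishing, so it runs verbatim on these corrected atoms; but producing them is the substance of the proof. Finally, Corollary \ref{atext} requires a priori boundedness (the paper stresses this is not removable for free); the paper supplies it by truncating to $K$ slots, using the crude bound $C_1K^{1/2}$, and letting $K\to\infty$ --- a step your sketch acknowledges but would still need to carry out.
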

\begin{proof} Let \beq f=\sum_{k=1}^\infty r_{m_k +1}f_k.\eeq Then \beq \label{eq:deltai}\Delta_k f=\left\{\begin{array}{ll}r_{m_j+1}f_j&\text{if }k=m_{j}+1\\ 0&\text{otherwise}\end{array}\right.\eeq and consequently \beq \left\|f\right\|_{H^1\left(\delta\right)}= \Lnorm{\left(f_k\right)}.\eeq Therefore it suffices to prove that \beq \indnorm{\left(T_k f_k: k\geq 1\right)}\lesssim C_1s^\frac12+C_2 \eeq when $f$ is an atom. Indeed, if this is true, then the operators \beq H^1(\delta)\ni f\mapsto \left(T_k f_k:1\leq k\leq K\right)\in\left(\bigoplus L^1[0,1]\right)_{\mathrm{ind}}\eeq are a priori bounded, because \begin{eqnarray} \Lnorm{\left(T_kf_k:1\leq k\leq K\right)}&\leq&\left\| \left(T_kf_k:1\leq k\leq K\right)\right\|_{L^1\left(\ell^1\right)}\\ &\leq& C_1 \left\| \left(f_k:1\leq k\leq K\right)\right\|_{L^1\left(\ell^1\right)}\\ &\leq& C_1 K^\frac12\left\| \left(f_k:1\leq k\leq K\right)\right\|_{L^1\left(\ell^2\right)}\\ &\leq& C_1 K^\frac12 \|f\|_{H^1(\delta)}\end{eqnarray} and by Corollary \ref{atext} their norms are $\lesssim C_1s^\frac12+C_2$, yielding \eqref{eq:rbddmain} as $K\to\infty$. \par
Suppose now that $f$ is an atom supported on a dyadic interval $I$, where $|I|=2^{-m}$. By \eqref{eq:deltai}, \beq f_k=r_{m_k+1}\Delta_{m_k+1}f.\eeq Let \beq j=\min\left\{i: m_i\geq m\right\}.\eeq Then for $k<j$, we have $m_k+1\leq m$, thus $\Delta_{m_k+1}f=\Delta_{m_k+1}\E_m f=0$. If $k\geq j$, then $m_k\geq m$, thus $\E_{m_k}f,\E_{m_k+1}f$ are supported on $I$, and so is $f_k=r_{m_k+1}\left(\E_{m_k+1}f-\E_{m_k}f\right)$. Therefore \begin{eqnarray} \indnorm{\left(T_k f_k:j+s>k\geq j\right)} &\leq & \left\|\left(T_k f_k:j+s>k\geq j\right)\right\|_{L^1\left(\ell^1\right)}\\ &=& \sum_{j\leq k<j+s}\left\|T_k f_k\right\|_{L^1}\\&\leq & C_1\sum_{j\leq k<j+s}\left\|f_k\right\|_{L^1}\\ &\leq & C_1 |I|^{\frac{1}{2}}\sum_{j\leq k<j+s}\left\|f_k\right\|_{L^2}\\ &\leq& C_1|I|^\frac12 s^\frac12 \sqrt{\sum_{j\leq k<j+s}\left\|f_k\right\|_{L^2}^2}\end{eqnarray}
and \begin{eqnarray} \indnorm{\left(T_k f_k:k\geq j+s\right)}&\leq& \left\|\left(T_k f_k:k\geq j+s\right)\right\|_{L^2\left(\ell^2\right)}\\ &=& \left(\sum_{k\geq j+s}\left\|T_k f_k\right\|_{L^2}^2\right)^\frac12\\ &\leq & C_2|I|^\frac12 \left(\sum_{k\geq j+s}\left\|f_k\right\|_{L^2}^2\right)^\frac12.\end{eqnarray}
Ultimately
\begin{eqnarray} \indnorm{\left(T_k f_k:k\geq 1\right)}&=& \indnorm{\left(T_k f_k:k\geq j\right)}\\ &\leq & \indnorm{\left(T_k f_k:j+s>k\geq j\right)}+ \indnorm{\left(T_k f_k:k\geq j+s\right)}\\ &\leq& C_1 |I|^\frac12 s^\frac12 \sqrt{\sum_{j\leq k<j+s}\left\|f_k\right\|_{L^2}^2}+C_2|I|^\frac12\sqrt{\sum_{k\geq j+s}\left\|f_k\right\|_{L^2}^2}\\ &\lesssim& \left(C_1 s^\frac12+C_2\right)|I|^\frac12 \sqrt{\sum_{k\geq j}\left\|f_k\right\|_{L^2}^2} \\ &=& \left(C_1 s^\frac12+C_2\right)|I|^\frac12\sqrt{\sum_{k\geq j}\left\|\Delta_{m_k+1}f\right\|_{L^2}^2}\\ &\leq & \left(C_1 s^\frac12+C_2\right)|I|^\frac12 \|f\|_{L^2}\\ &\leq & C_1 s^\frac12+C_2\end{eqnarray} as desired. \end{proof}
We identify $[0,1]$ with $\T$ and for $f\in L^1(\T)$ denote \beq \tau_{x_0}f(x)= f\left(x-x_0\right), \quad \E^*_N = \frac1N \sum_{j=0}^{N-1} \tau_{\frac{j}{N}},\eeq\beq \E_N f= \sum_{k=0}^{N-1} N\mathbbm{1}_{\left[\frac{k}{N},\frac{k+1}{N}\right)}\int_{\frac{k}{N}}^\frac{k+1}{N} f(x)\d x.\eeq
\begin{lem}\label{enl2}If $f\in L^2(\T)$ is supported on an interval $I$, then \beq \left\|\E^*_N f\right\|_{L^2}\leq \left(|I|+\frac{2}{N}\right)^\frac12\|f\|_{L^2}.\eeq\end{lem}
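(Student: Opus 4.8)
The plan is to unwind the definition $\E^*_N f(x) = \frac1N\sum_{j=0}^{N-1} f\p{x - \tfrac{j}{N}}$ and exploit that, for a fixed $x$, only those translates whose argument lands in $I$ contribute to the sum. First I would fix $x\in\T$ and let $m(x)$ denote the number of indices $j\in\{0,\dots,N-1\}$ for which $x - \tfrac{j}{N}\in I$, equivalently the number of the equally spaced points $\tfrac{j}{N}$ lying in the arc $x - I$. Since $x-I$ is an arc of length $|I|$ and the $\tfrac{j}{N}$ are $N$ equally spaced points of $\T$, an elementary counting argument gives $m(x)\leq N|I|+1$ uniformly in $x$.

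Next I would apply the Cauchy--Schwarz inequality to the at most $m(x)$ nonzero terms, obtaining
\[
\left|\E^*_N f(x)\right|^2 \leq \frac{m(x)}{N^2}\sum_{j=0}^{N-1}\left|f\p{x-\tfrac{j}{N}}\right|^2 \leq \frac{N|I|+1}{N^2}\sum_{j=0}^{N-1}\left|f\p{x-\tfrac{j}{N}}\right|^2.
\]
Integrating in $x$ over $\T$ and using translation invariance of Lebesgue measure, so that $\int_\T\left|f\p{x-\tfrac{j}{N}}\right|^2\d x=\|f\|_{L^2}^2$ for every $j$, the right-hand side integrates to $\frac{N|I|+1}{N^2}\cdot N\|f\|_{L^2}^2=\p{|I|+\tfrac1N}\|f\|_{L^2}^2$. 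Taking square roots yields the claim, in fact with the sharper bound $\left\|\E^*_N f\right\|_{L^2}\leq\sqrt{|I|+\tfrac1N}\,\|f\|_{L^2}$, which is dominated by $\sqrt{|I|+\tfrac2N}\,\|f\|_{L^2}$.

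The only genuine obstacle is the counting bound $m(x)\leq N|I|+1$, which must be made uniform in $x$ and robust to whether $I$ is taken closed or half-open; this edge-case fussing is precisely what the slack between $\tfrac1N$ and the stated $\tfrac2N$ absorbs. Everything else is a single application of Cauchy--Schwarz followed by Fubini. One could alternatively pass to the Fourier side, where $\E^*_N$ becomes the orthogonal projection onto frequencies divisible by $N$, so that $\left\|\E^*_N f\right\|_{L^2}^2=\sum_{N\mid n}\left|\widehat f(n)\right|^2$; but extracting the support-dependent constant $|I|$ from that expression is less direct than the overlap estimate above, so I would prefer the real-variable route.
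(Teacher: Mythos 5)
Your proof is correct, and it in fact gives the slightly sharper bound $\left\|\E^*_N f\right\|_{L^2}\leq \left(|I|+\frac{1}{N}\right)^{\frac12}\|f\|_{L^2}$; the counting estimate you flag as the only delicate point does hold, since a closed arc of length $|I|$ in $\T$ contains at most $\lfloor N|I|\rfloor+1\leq N|I|+1$ points of the grid $\left\{\frac{j}{N}\right\}_{j=0}^{N-1}$, uniformly in the position of the arc. Your route is genuinely different in organization from the paper's, though both rest on the same counting fact combined with Cauchy--Schwarz. The paper decomposes $f$ spatially: setting $J_k=\left[\frac{k}{N},\frac{k+1}{N}\right)$ and $f_k=f\mathbbm{1}_{J_k}$, it first computes the exact identity $\left\|\E^*_N f_k\right\|_{L^2}^2=\frac{1}{N}\|f_k\|_{L^2}^2$ (the $N$ translates of a function living in a single grid cell have pairwise disjoint supports), and then sums over the at most $N|I|+2$ cells meeting $I$ via the triangle inequality and Cauchy--Schwarz in $k$. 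You instead argue pointwise in $x$: Cauchy--Schwarz over the at most $N|I|+1$ translation indices $j$ with $x-\frac{j}{N}\in I$, followed by integration in $x$ and translation invariance. Your version is more economical --- it needs no decomposition of $f$ and no disjoint-support observation, just one application of Cauchy--Schwarz and one of Fubini --- and it yields $\frac{1}{N}$ where the paper gets $\frac{2}{N}$, the paper's loss coming from the two endpoint cells that $I$ may cover only partially. What the paper's cell decomposition buys is the clean exact orthogonality statement for cell-supported functions, but nothing downstream depends on the distinction: Theorem \ref{oldrev} only uses $\left\|\E^*_{N}f\right\|_{L^2}\lesssim |I|^{\frac12}\|f\|_{L^2}$ in the regime $N\gtrsim |I|^{-1}$, where the two constants are interchangeable, so either proof serves equally well.
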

\begin{proof}Let $J_k= \left[\frac{k}{N},\frac{k+1}{N}\right)$. Suppose first that $\supp f\subset J_{k_0}$. Then for different values of $k$, the functions $\tau_{\frac{k}{N}} f$ are supported on disjoint intervals $J_{k+k_0}$. Thus \begin{eqnarray}\left\|\E^*_N f\right\|_{L^2}^2&=& \int_0^1 \left|\E_N^* f(x)\right|^2 \d x\\ &=& \sum_{k=0}^{N-1} \int_{J_{k+k_0}}\left|\frac{1}{N}\tau_{\frac{k}{N}}f(x)\right|^2 \d x\\ &=& \frac{1}{N^2} \sum_{k=0}^{N-1} \int_{J_{k_0}}\left|f(x)\right|^2\d x\\ &=&\label{eq:suppJ0}\frac{1}{N}\|f\|_{L^2}^2. \end{eqnarray} Now let $f$ be supported on $I$. For at most 2 elements of the set $\left\{k: I\cap J_k\neq \emptyset\right\}$ we have $I\cap J_k\neq J_k$, so \beq \label{eq:minus2}\frac{1}{N}\left(\left|\left\{k: I\cap J_k\neq \emptyset\right\}\right|-2\right)\leq |I|.\eeq Denoting $f_k=f\mathbbm{1}_{J_k}$ and utilising \eqref{eq:suppJ0} and \eqref{eq:minus2}, we get \begin{eqnarray} \left\|\E^*_Nf\right\|_{L^2}&\leq & \sum_{k=0}^{N-1} \left\|\E^*_Nf_k\right\|_{L^2}\\  &\leq &\frac{1}{N^\frac12} \sum_{k=0}^{N-1} \left\|f_k\right\|_{L^2}\\ &= &\frac{1}{N^\frac12} \sum_{I\cap J_k\neq \emptyset} \left\|f_k\right\|_{L^2}\\ &\leq& \sqrt{\frac{\left|\left\{k: I\cap J_k\neq \emptyset\right\}\right|}{N}} \sqrt{\sum_{I\cap J_k\neq \emptyset} \left\|f_k\right\|^2_{L^2}}\\ &\leq& \sqrt{|I|+\frac2N}\sqrt{\sum_{k=0}^{N-1}\int_{J_k} \left|f_k(x)\right|^2\d x}\\ &=& \sqrt{|I|+\frac2N}\|f\|_{L^2}\end{eqnarray}as desired. \end{proof}

\begin{lem}\label{schodkapr}Let $f$ be absolutely continuous on $\T$. Then \beq\left|\left(\mathrm{id}-\E_N\right)f\right|\leq \frac{1}{N}\E_N \left|f'\right|\eeq almost everywhere. \end{lem}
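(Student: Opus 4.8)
The plan is to reduce the claimed pointwise inequality to a mean-oscillation estimate on each block $J_k=\left[\frac{k}{N},\frac{k+1}{N}\right)$ and then to control that oscillation by the integral of $|f'|$ over $J_k$ using absolute continuity.

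First I would unwind the definitions on a single block. For $x\in J_k$ the operator $\E_N$ replaces $f$ by its average over $J_k$, so $\E_N f(x)=\frac{1}{|J_k|}\int_{J_k}f(y)\,\d y$ with $|J_k|=\frac1N$, and likewise $\frac1N\E_N|f'|(x)=\int_{J_k}|f'(t)|\,\d t$. Hence it suffices to prove, for each $k$ and each $x\in J_k$, that
\[
\left| f(x)-\frac{1}{|J_k|}\int_{J_k}f(y)\,\d y\right|\leq \int_{J_k}|f'(t)|\,\d t.
\]

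Next I would write the left-hand side as an average of differences: since the constant $f(x)$ is its own mean over $J_k$,
\[
f(x)-\frac{1}{|J_k|}\int_{J_k}f(y)\,\d y=\frac{1}{|J_k|}\int_{J_k}\left(f(x)-f(y)\right)\,\d y.
\]
The key step is the bound $|f(x)-f(y)|\leq\int_{J_k}|f'(t)|\,\d t$, valid for all $x,y\in J_k$. This is exactly where absolute continuity enters: by the fundamental theorem of calculus $f(x)-f(y)=\int_y^x f'(t)\,\d t$, and since the interval of integration is contained in $J_k$, its modulus is at most $\int_{J_k}|f'(t)|\,\d t$. Inserting this ($y$-independent) bound into the averaged identity and integrating the resulting constant over $J_k$ leaves precisely $\int_{J_k}|f'(t)|\,\d t$, which is the asserted inequality.

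I do not expect a genuine obstacle here, as the argument is elementary. The only point deserving a word of care is that $f'$ is defined merely almost everywhere; nonetheless $\int_{J_k}|f'|$ is well defined and the fundamental theorem of calculus is valid for absolutely continuous $f$, so the inequality in fact holds at every $x$, and in particular almost everywhere.
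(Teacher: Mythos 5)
Your proof is correct and follows essentially the same route as the paper's: both write $f(x)-\E_N f(x)$ as the average of $f(x)-f(y)$ over the block containing $x$, apply the fundamental theorem of calculus for absolutely continuous functions to get $f(x)-f(y)=\int_y^x f'(z)\,\d z$, and bound this by $\int_{J_k}|f'|$, which equals $\frac1N\E_N|f'|(x)$ on that block. The paper merely phrases the final estimate via an indicator of $\mathrm{conv}\{x,y\}$ inside a double integral, which is the same bound you obtain by inserting the $y$-independent estimate and averaging.
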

\begin{proof} Fix $x$ and let $I$ be the interval of the form $\left[\frac{k}{N},\frac{k+1}{N}\right)$ containing $x$. Then \begin{eqnarray}\left|\left(\mathrm{id}-\E_N\right)f(x)\right|&=&\left|N\int_{I}\left(f(x)-f(y)\right)\d y\right|\\ &=& \left|N\int_{I}\int_{y}^x f'(z)\d z\d y\right|\\ &\leq&  N\int_{I^2}\left|f'(z)\right| \mathbbm{1}_{\mathrm{conv}\{x,y\}}(z)\d z \d y\\ &\leq& \int_{I}\left|f'(z)\right|\d z\\ &=&\frac{1}{N} \E_N\left|f'\right|(x).\end{eqnarray}
\end{proof}
Let us recall the Stein multiplier theorem \cite{steinmult}. 
\begin{thm}\label{steinmult}Let $\left(\mu(n): n\geq 0\right)$ be a sequence of scalars such that \beq \left|\mu(n)\right|\leq C,\quad \left(n+1\right)\left|\mu(n+1)-\mu(n)\right|\leq C.\eeq Then $\left(\mu_n: n\geq 0\right)$ defines a bounded Fourier multiplier on $H^1(\D)$ of norm $\lesssim C$. \end{thm}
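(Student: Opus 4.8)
The plan is to recognise $T_\mu$ as a Calder\'on--Zygmund operator and to use the one-sided (analytic) nature of the symbol to reduce $H^1(\D)\to H^1(\D)$ boundedness to an $H^1\to L^1$ bound. Indeed, since $\mu$ is supported on $n\ge 0$, $T_\mu$ preserves analyticity, so $T_\mu f$ again lies in $H^1(\D)$ and $\|T_\mu f\|_{H^1(\D)}=\|T_\mu f\|_{L^1(\T)}$; it therefore suffices to prove $\|T_\mu f\|_{L^1(\T)}\lesssim C\|f\|_{H^1(\D)}$. Viewing $H^1(\D)$ as a subspace of the real Hardy space $H^1(\T)$ with comparable norms and invoking the atomic decomposition of the latter (in the spirit of Theorem \ref{atdec}, cf. \cite{coif}, with exactly the $(1,2)$-atoms of our Definition), it is enough to bound $\|T_\mu a\|_{L^1(\T)}\lesssim C$ uniformly over atoms $a$. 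The $L^2$ estimate $\|T_\mu\|_{L^2\to L^2}=\sup_{n\ge 0}|\mu(n)|\le C$ handles the integral of $|T_\mu a|$ over a fixed dilate of the supporting arc via Cauchy--Schwarz, so the entire matter is reduced to a H\"ormander regularity bound $\int_{|x|>2|y|}|K(x-y)-K(x)|\,\d x\lesssim C$ for the convolution kernel $K(x)=\sum_{n\ge 0}\mu(n)\e^{2\pi i n x}$, the tails being controlled by the mean-zero property of $a$.

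To obtain this regularity bound I would decompose the frequencies into dyadic blocks $I_j=\{2^j\le n<2^{j+1}\}$ and set $K_j(x)=\sum_{n\in I_j}\mu(n)\e^{2\pi i n x}$. The hypotheses translate into two $L^2$ estimates per block, in which \emph{both} conditions are used: $|\mu(n)|\le C$ gives the amplitude bound $\|K_j\|_{L^2(\T)}^2=\sum_{n\in I_j}|\mu(n)|^2\lesssim C^2 2^{j}$, while $(n+1)|\mu(n+1)-\mu(n)|\le C$ gives, after a summation by parts and Plancherel, the spatial-localisation bound $\int_{\T}|x|^2|K_j(x)|^2\,\d x\lesssim C^2 2^{-j}$ (distances measured on $\T$), expressing that $K_j$ concentrates at scale $2^{-j}$ exactly as a smooth Mikhlin frequency shell would. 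A weighted Cauchy--Schwarz against the weight $(1+2^{2j}|x|^2)^{\pm 1/2}$ then converts these into the kernel bounds $\|K_j\|_{L^1(\T)}\lesssim C$, the tail bound $\int_{|x|>2|y|}|K_j(x)|\,\d x\lesssim C\,2^{-j/2}|y|^{-1/2}$, and the derivative bound $\|\partial_x K_j\|_{L^1(\T)}\lesssim C\,2^{j}$.

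Finally I would assemble the H\"ormander condition by splitting the blocks at $2^j\approx 1/|y|$. For the low blocks $2^j\le 1/|y|$ the mean value estimate and the derivative bound give $\int_{\T}|K_j(x-y)-K_j(x)|\,\d x\le |y|\,\|\partial_x K_j\|_{L^1}\lesssim C\,(2^j|y|)$, which sums geometrically to $\lesssim C$; for the high blocks $2^j>1/|y|$ the tail bound gives $\int_{|x|>2|y|}|K_j(x-y)-K_j(x)|\,\d x\lesssim C\,2^{-j/2}|y|^{-1/2}$, which again sums geometrically to $\lesssim C$. I expect this balancing across scales to be the main obstacle: a single summation by parts over all frequencies, or any estimate at a single scale, diverges logarithmically because $\sum_n|\mu(n+1)-\mu(n)|$ is only logarithmically convergent, and it is precisely the pairing of the two hypotheses through Plancherel and the weighted Cauchy--Schwarz --- rather than pointwise kernel bounds --- that produces the geometric decay needed for summability. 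Combining the resulting atom estimate $\|T_\mu a\|_{L^1}\lesssim C$ with the reductions of the first paragraph yields the theorem.
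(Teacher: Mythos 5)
The paper itself does not prove this statement: Theorem \ref{steinmult} is quoted from Stein's note \cite{steinmult} and used as a black box, so there is no internal proof to compare against, and your attempt must be judged on its own merits. Its architecture is the standard Calder\'on--Zygmund route --- reduce to an $H^1\to L^1$ bound using that the symbol is supported on $n\geq 0$, then to a uniform bound on $(1,2)$-atoms, then to an $L^2$ bound plus a H\"ormander kernel condition --- and that architecture is sound; the first and third paragraphs of your plan are correct as conditional statements.

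The genuine gap is in the second paragraph: you take \emph{sharp} dyadic frequency blocks $I_j=\left\{2^j\leq n<2^{j+1}\right\}$, and for these the claimed spatial-localisation bound $\int_\T |x|^2\left|K_j(x)\right|^2\d x\lesssim C^2 2^{-j}$ is false. The Plancherel computation you allude to applies to the first differences of the truncated symbol $\mu\mathbbm{1}_{I_j}$, and these include the two jumps at the block edges, of sizes $\left|\mu\left(2^j\right)\right|$ and $\left|\mu\left(2^{j+1}\right)\right|$, which are only bounded by $C$; they contribute $\simeq C^2$ to $\sum_n\left|\mu\mathbbm{1}_{I_j}(n)-\mu\mathbbm{1}_{I_j}(n-1)\right|^2$, swamping the $\simeq 2^j\cdot\left(C2^{-j}\right)^2=C^22^{-j}$ coming from the interior differences. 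This is not a repairable overestimate on your part but a real failure of the intermediate claims: for $\mu\equiv 1$ (the identity operator, $C=1$) the block kernel is a modulated Dirichlet kernel, $\left|K_j(x)\right|=\left|\sin\left(\pi 2^jx\right)/\sin\left(\pi x\right)\right|$, for which $\int_\T|x|^2\left|K_j\right|^2\d x\simeq 1$ and $\left\|K_j\right\|_{L^1}\simeq j$, directly contradicting your conclusions $\left\|K_j\right\|_{L^1}\lesssim C$ and $\left\|\partial_xK_j\right\|_{L^1}\lesssim C2^j$; with these degraded bounds both of your sums over $j$ diverge logarithmically --- exactly the divergence you warn about in your last paragraph, which your own blocks do not escape. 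The fix is classical: replace $\mathbbm{1}_{I_j}$ by a smooth Littlewood--Paley partition of unity $\phi_j$ adapted to $\left[2^{j-1},2^{j+1}\right]$ with $\sum_j\phi_j=1$ on $n\geq 1$ and $\left|\phi_j(n+1)-\phi_j(n)\right|\lesssim 2^{-j}$. Then the differences of $\mu\phi_j$ are $O\left(C2^{-j}\right)$ throughout, with no boundary jumps (both hypotheses on $\mu$ are needed here, exactly as you say), all three of your block estimates hold as stated, and the assembly in your third paragraph goes through verbatim. With that single repair your argument becomes a correct, and essentially standard, proof of the theorem.
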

From now on $\left(d_k:k\geq 1\right)$ will be a fixed lacunary sequence of integers, i.e. \beq d_{k+1}\geq (1+\alpha)d_k\eeq for some $\alpha>0$. We would like to extend exhibit a version of Theorem \ref{dyadicrbdd} for trigonometric polynomials and operators $\E^*_N$. Denote the spaces of analytic polynomials by \beq H^1_n(\D)= \mathrm{span}\left\{\e^{2\pi ijt}:0\leq j\leq n\right\}\subset H^1(\D) \eeq and the Fej\'er kernel by \beq K_n(t)=\sum_{j=-n}^n \left(1-\frac{|j|}{n}\right)\e^{2\pi i j t}.\eeq

\begin{lem}\label{fejerrbdd}Let $f_k \in H^1_{d_k}(\D)$. Then \beq \Lnorm{\left(\frac{f_k'}{d_k}\right)_{k=1}^\infty} = 2\pi \Lnorm{\left(f_k\ast \left(K_{d_k}\cdot \e^{2\pi i d_{k} t}\right)\right)_{k=1}^n}\leq C_\alpha \Lnorm{\left(f_k\right)_{k=1}^\infty}\eeq for some $C_\alpha\lesssim 1+\alpha^{-3}$. \end{lem}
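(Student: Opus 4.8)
The plan is to handle the stated equality and the inequality separately. The equality is a computation of Fourier coefficients: writing $f_k=\sum_{n=0}^{d_k}\hat f_k(n)\e^{2\pi i n t}$, the $n$-th coefficient of $\frac{f_k'}{d_k}$ is $\frac{2\pi i n}{d_k}\hat f_k(n)$, while the $n$-th coefficient of $K_{d_k}\cdot\e^{2\pi i d_k t}$ equals $1-\frac{|n-d_k|}{d_k}=\frac{n}{d_k}$ for $0\le n\le d_k$; multiplying by $\hat f_k(n)$ and comparing gives $\frac{f_k'}{d_k}=2\pi i\,f_k\ast\p{K_{d_k}\e^{2\pi i d_k t}}$ pointwise, so the two $L^1\p{\ell^2}$ norms agree up to the factor $2\pi$. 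Writing $\Phi_k=K_{d_k}\e^{2\pi i d_k t}$ and $T_kf=f\ast\Phi_k$, I record the two facts that drive everything: $\|\Phi_k\|_{L^1}=\|K_{d_k}\|_{L^1}=1$, so $T_k$ is an $L^1$-contraction; and on $H^1_{d_k}(\D)$ the symbol of $T_k$ is $n\mapsto\frac{n}{d_k}\in[0,1]$, so $T_k=\id-\sigma_{d_k}$ (with $\sigma_N$ the Ces\`aro mean) is also an $L^2$-contraction. In particular $m_k(n)=\frac{n}{d_k}$ obeys the Stein condition of Theorem \ref{steinmult} uniformly, $(n+1)|m_k(n+1)-m_k(n)|\le 2$, which is the per-scale smoothness I will exploit.

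For the inequality I would view $(f_k)\mapsto(T_kf_k)$ as a diagonal operator and, exactly as in the proof of Theorem \ref{dyadicrbdd}, reduce to an atomic estimate. Using the $\ell^2$-valued analogue of the atomic decomposition (cf.\ Theorem \ref{atdec}) together with the lattice form of Corollary \ref{atext} applied with $X=L^1\p{\ell^2}$, it suffices to prove a bound $\Lnorm{\p{T_ka_k}_k}\lesssim C_\alpha$ that is uniform over $\ell^2$-valued atoms $(a_k)$: sequences supported on a common interval $I$ with $\int a_k=0$ for every $k$ and $\big\|\big(\sum_k|a_k|^2\big)^{1/2}\big\|_{L^2}\le|I|^{-1/2}$. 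Justifying that $\Lnorm{(f_k)}$ dominates this atomic norm is where the analyticity $\deg f_k\le d_k$ enters, since that is what rescues the $L^1$ endpoint.

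The estimate on a fixed atom splits at the scale $d_k\approx|I|^{-1}$, mirroring the $\{j\le k<j+s\}$ versus $\{k\ge j+s\}$ dichotomy of Theorem \ref{dyadicrbdd}. For the fine scales $d_k\gtrsim|I|^{-1}$ the kernel $\Phi_k$ lives essentially at scale $d_k^{-1}\lesssim|I|$; an $L^2$ estimate on $I$-adapted scales in the spirit of Lemma \ref{enl2} keeps the bulk of $T_ka_k$ on an $O(|I|)$-neighbourhood of $I$ and gives $\|T_ka_k\|_{L^2}\lesssim\|a_k\|_{L^2}$, so Cauchy--Schwarz over that neighbourhood yields $\big\|\big(\sum_{\mathrm{fine}}|T_ka_k|^2\big)^{1/2}\big\|_{L^1}\lesssim|I|^{1/2}\big(\sum_k\|a_k\|_{L^2}^2\big)^{1/2}\le 1$. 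For the coarse scales $d_k\lesssim|I|^{-1}$ the cancellation $\int a_k=0$ is essential: writing $T_ka_k(x)=\int a_k(y)\,\p{\Phi_k(x-y)-\Phi_k(x-x_I)}\,\d y$ for the centre $x_I$ of $I$ and using the regularity of $\Phi_k$ (quantitatively $\|\Phi_k'\|_\infty\approx d_k^2$, the same mechanism underlying Lemma \ref{schodkapr}) produces a Calder\'on--Zygmund head-and-tail bound whose contributions decay geometrically in $k$ by lacunarity. Summing the two regimes gives the claim.

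The main obstacle is that, unlike the martingale operators of Theorem \ref{dyadicrbdd}, $T_k$ does not preserve $\supp a_k$: the Fej\'er kernel decays only like $t^{-2}$, so both regimes leave tails off $I$ that must be summed across the scales $d_k$. Each summation of such tails over the lacunary scales lying in one dyadic frequency window costs a factor comparable to $\log(1+\alpha)^{-1}\approx\alpha^{-1}$, and the three independent places where this occurs --- counting the coarse scales, the $\ell^1\hookrightarrow\ell^2$ passage on them, and summing the $L^2$ tails of the fine scales --- are what accumulate into the constant $C_\alpha\lesssim 1+\alpha^{-3}$. Controlling these tails quantitatively, rather than the algebra of the two main terms, is the real work.
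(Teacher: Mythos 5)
Your computation of the equality is correct, and your atom-level estimate (the coarse/fine split at $d_k\approx|I|^{-1}$, with an $L^2$ bound plus Fej\'er tail sums on the fine scales and the cancellation-plus-kernel-regularity bound on the coarse scales) is workable Calder\'on--Zygmund routine, and a genuinely different route from the paper's. But the step that reduces the lemma to atoms is a genuine gap, and it sits exactly where the difficulty of the lemma lives. You invoke an ``$\ell^2$-valued analogue of the atomic decomposition'' to claim that $\Lnorm{\p{f_k}}$ dominates an atomic norm of the sequence $\p{f_k}$. Theorem \ref{atdec} cannot be cited for this: it is a scalar statement about the dyadic martingale space $H^1(\delta)$, and in Theorem \ref{dyadicrbdd} the paper earns the right to use it by explicitly exhibiting a dyadic $H^1$ function $f=\sum_k r_{m_k+1}f_k$ with $\|f\|_{H^1(\delta)}=\Lnorm{\p{f_k}}$ --- a construction available there because the $f_k$ are $\mathcal{F}_{m_k}$-measurable, with no counterpart for trigonometric polynomials. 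What you actually need is that a sequence of analytic polynomials, normed in $L^1\p{\ell^2}$, lies in the real $\ell^2$-valued Hardy space $H^1\p{\T;\ell^2}$ with comparable norm and hence admits an atomic decomposition. That statement is true, but it is a theorem of vector-valued Hardy space theory --- e.g.\ via subharmonicity of $\|F\|_{\ell^2}^{1/2}$ for $\ell^2$-valued analytic $F$, the resulting maximal-function bound, and a Banach-valued version of Coifman's atomic decomposition --- none of which appears in the paper; your one sentence that ``analyticity rescues the $L^1$ endpoint'' is precisely the content you would have to prove. As written, your first step assumes something of essentially the same depth as the lemma itself.

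It is worth seeing how the paper sidesteps all of this: it modulates the $f_k$ into pairwise disjoint spectral blocks $\left[3D_{k-1}+d_k,3D_{k-1}+2d_k\right]$ (lacunarity entering through $D_k\leq\left(1+\alpha^{-1}\right)d_k$), applies Stein's multiplier theorem (Theorem \ref{steinmult}) twice --- once to build the sign operators $S_\varepsilon$, which with Khintchine's inequality give the square-function equivalence \eqref{eq:sqfchar}, and once to build a single scalar multiplier $K$ on $H^1(\D)$ realizing the convolution with $K_{d_k}\cdot\e^{2\pi i d_k t}$ simultaneously on all blocks --- and thereby converts the $L^1\p{\ell^2}$ inequality into the boundedness of one scalar multiplier operator on $H^1(\D)$. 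So the vector-valued statement is deliberately traded for a scalar one, which is the whole point of the argument. To salvage your approach you would have to import the $\ell^2$-valued analytic-to-atomic reduction with a proof or a precise citation; the atom estimate alone does not prove the lemma. (Your accounting of the constant is also heuristic --- your own sketch would give roughly $1+\alpha^{-1}$ rather than three factors of $\alpha^{-1}$ --- but since the lemma only asserts $C_\alpha\lesssim 1+\alpha^{-3}$, that is not an error.)
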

\begin{proof}Let us denote \beq D_k=\sum_{j=1}^k d_k.\eeq By lacunarity of $d_k$,\begin{eqnarray} D_k&=& d_k\sum_{j=1}^k \frac{d_j}{d_k}\\ &\leq& d_k\sum_{j=1}^k\frac{1}{\left(1+\alpha\right)^{k-j}}\\ &\leq & \frac{d_k}{1-\frac{1}{1+\alpha}}\\ &=&\label{lacun} \left(1+\frac{1}{\alpha}\right)d_k.\end{eqnarray} For any sequence of signs $\varepsilon_k\in\{-1,1\}$ we define, following \cite{wojtsurv}, a sequence $\mu_\varepsilon(n)$ such that \beq \mu_\varepsilon\left(3D_{k-1}+j d_k\right) = \left\{\begin{array}{lll} \varepsilon_k & \text{for} &j\in\{1,2\}\\ 0&\text{for}& j=0\end{array}\right.\eeq and $\mu_\varepsilon$ is affine on each interval of the form $\left[3D_{k-1}+jd_k, 3D_{k-1}+(j+1)d_k\right]$, where $j\in\{0,1,2\}$. For any $n\in \left(3D_{k-1},3D_k\right]$ we have \beq \label{eq:ordalpha}n\left|\mu_\varepsilon(n)-\mu_\varepsilon(n-1)\right|\leq 3D_k\cdot \frac{1}{d_k}\leq 3\left(1+\alpha^{-1}\right)\eeq by \eqref{lacun}, so $\mu_\varepsilon$ satisfies the assumptions of Theorem \ref{steinmult}. Let $S_\varepsilon$ denote the associated operator on $H^1(\D)$. Let $g_k$ satisfy $\supp \widehat{g_k}\subset\left[3D_{k-1}+d_k,3D_{k-1}+2d_k\right]$. By definition of $\mu_\varepsilon$, \beq S_\varepsilon g_k= \varepsilon_k g_k.\eeq Therefore \beq \left\|\sum_{k=1}^n \varepsilon_k g_k\right\|_{L^1}= \left\|S_\varepsilon\sum_{k=1}^n g_k\right\|_{L^1}\lesssim_\alpha  \left\|\sum_{k=1}^n g_k\right\|_{L^1}.\label{eq:dupa}\eeq
Applying \eqref{eq:dupa} to $\varepsilon_k g_k$ in place of $g_k$ we get the reverse estimate, so \beq \left\|\sum_{k=1}^n g_k\right\|_{L^1}\simeq_\alpha \left\|\sum_{k=1}^n \varepsilon_k g_k\right\|_{L^1}.\label{eq:muewe}\eeq Applying the Khintchine inequality to the right hand side of \eqref{eq:muewe}, we get \beq \left\|\sum_{k=1}^n g_k\right\|_{L^1}\simeq_\alpha \left\|\sqrt{\sum_{k=1}^n \left|g_k\right|^2}\right\|_{L^1}.\label{eq:sqfchar}\eeq Now let us notice that Theorem \ref{steinmult}, by an argument identical to the $S_\varepsilon$ case, implies the boundedness of an operator $K$ on $H^1(\D)$ given by the multiplier $\widehat{K}$ satisfying \beq \widehat{K}\left(3D_{k-1}+jd_k\right)= \left\{\begin{array}{lll} 0 &\text{for}& j\in\{0,1\}\\ 1&\text{for}& j=2\end{array}\right.\eeq and $\widehat{K}$ affine on each interval of the form $\left[3D_{k-1}+jd_k, 3D_{k-1}+(j+1)d_k\right]$, where $j\in\{0,1,2\}$. Also, for $g_k$ such that $\widehat{g_k}\subset \left[3D_{k-1}+d_k,3D_k\right]$ we have \beq \label{eq:bezminusa}Kg_k = g_k\ast \left(\e^{2\pi i \left(3D_{k-1}+2d_k\right)t} K_{d_k}\right).\eeq Therefore \begin{eqnarray}\Lnorm{\left(\frac{f_k'}{d_k}\right)_{k=1}^n}&=&\int_\T \sqrt{\sum_{k=1}^n \left|\frac{f_k'(t)}{d_k}\right|^2}\d t\\ &=& \int_\T \sqrt{\sum_{k=1}^n \left|\frac{2\pi i}{d_k}\sum_{0\leq j<d_k}j\widehat{f_k}(j)\e^{2\pi i j t}\right|^2}\d t\\ &=& 2\pi \int_\T \sqrt{\sum_{k=1}^n \left|\sum_{0\leq j<d_k}\widehat{f_k}(j)\widehat{K_{d_k}}\left(j-d_k\right)\e^{2\pi i j t}\right|^2}\d t\\ &=& 2\pi \Lnorm{\left(f_k\ast \left(K_{d_k}\cdot \e^{2\pi i d_{k} t}\right)\right)_{k=1}^n} \\ &\simeq& \Lnorm{\left(\e^{2\pi i \left(D_k+d_k\right)t}\left(f_k\ast \left(K_{d_k}\cdot \e^{2\pi i d_{k} t}\right)\right)\right)_{k=1}^n}\\ &=& \Lnorm{\left(\left(f_k\cdot\e^{2\pi i \left(D_k+d_k\right)t}\right) \ast\left(K_{d_k}\cdot \e^{2\pi i \left(D_k+2d_k\right)t}\right)\right)_{k=1}^n}\\ \text{(By \eqref{eq:sqfchar})}&\simeq_\alpha& \left\|\sum_{k=1}^n\left(f_k\cdot\e^{2\pi i \left(D_k+d_k\right)t}\right) \ast\left(K_{d_k}\cdot \e^{2\pi i \left(D_k+2d_k\right)t}\right)\right\|_{L^1}\\ \text{(By \eqref{eq:bezminusa})}&=& \left\|\sum_{k=1}^n K\left(f_k\cdot\e^{2\pi i \left(D_k+d_k\right)t}\right)\right\|_{L^1}\\ \text{($K$ is bounded)} &\lesssim_\alpha& \left\|\sum_{k=1}^n f_k\cdot\e^{2\pi i \left(D_k+d_k\right)t}\right\|_{L^1}\\ \text{(By \eqref{eq:sqfchar})} &\simeq_\alpha& \Lnorm{\left(f_k\right)_{k=1}^n}.\end{eqnarray} The bound $C_\alpha\lesssim 1+\alpha^{-3}$, which is probably not optimal, comes from the fact that each of the three $\lesssim_\alpha$ above originated form a single application of one of the operators $K,S_\varepsilon$, which are of norm $\lesssim 1+\alpha^{-1}$ by \eqref{eq:ordalpha} and Theorem \ref{steinmult}. 
\end{proof}

\begin{lem}\label{discretization}Let $f_k\in H^1_{d_k}(\D)$ and $\left(M_k:k\geq 1\right)$ be a sequence of integers such that $d_k\leq \varepsilon C_\alpha^{-1} M_k$, where $\varepsilon<\frac{1}{2}$. Then \beq \label{eq:discretization}\left(1-\varepsilon\right)\Lnorm{\left(\E_{M_k}\left|f_k\right|\right)_{k=1}^\infty}\leq \Lnorm{\left(f_k\right)_{k=1}^\infty}\leq \frac{1-\varepsilon}{1-2\varepsilon} \Lnorm{\left(\E_{M_k}f_k\right)_{k=1}^\infty}.\eeq \end{lem}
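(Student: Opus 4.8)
The plan is to treat the two inequalities asymmetrically: the left-hand one is essentially free, while the right-hand one carries all the content and rests on the smoothing estimate of Lemma \ref{schodkapr} combined with Lemma \ref{fejerrbdd}. Two elementary properties of $\indnorm{\cdot}$ will do most of the bookkeeping. The first is \emph{monotonicity}: if $0\le a_k\le b_k$ pointwise then $\indnorm{(a_k)}\le\indnorm{(b_k)}$, since the corresponding inequality holds for the $\ell^2$-norms of the vectors $(a_k(\omega_k))_k$ and $(b_k(\omega_k))_k$ before integrating over $\Omega^\infty$; in particular $\indnorm{(g_k)}=\indnorm{(|g_k|)}$. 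The second is \emph{contraction under $\E_{M_k}$}: since each $\E_{M_k}$ acts on the $k$-th coordinate only, the sequence $(\E_{M_k}g_k)_k$ is exactly the conditional expectation of $(g_k)_k$ with respect to the product $\sigma$-algebra $\bigotimes_k\mathcal G_k$, where $\mathcal G_k$ is generated by the partition of $[0,1]$ into intervals of length $M_k^{-1}$. Conditional Jensen applied to the convex map $v\mapsto\|v\|_{\ell^2}$ then gives $\indnorm{(\E_{M_k}g_k)}\le\indnorm{(g_k)}$.

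Granting these, the left inequality is immediate: contraction together with $\indnorm{(|f_k|)}=\indnorm{(f_k)}$ gives $\indnorm{(\E_{M_k}|f_k|)}\le\indnorm{(f_k)}$, which is even stronger than the claimed bound, the factor $1-\varepsilon\le 1$ being harmless.

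For the right inequality the crux is the error estimate $\indnorm{(f_k-\E_{M_k}f_k)}\le\varepsilon\,\indnorm{(f_k)}$. To obtain it I would apply Lemma \ref{schodkapr} to each $f_k$, a trigonometric polynomial and hence absolutely continuous, yielding the pointwise domination $|f_k-\E_{M_k}f_k|\le M_k^{-1}\E_{M_k}|f_k'|$. Writing $M_k^{-1}\E_{M_k}|f_k'|=\tfrac{d_k}{M_k}\E_{M_k}(|f_k'|/d_k)$ and inserting the hypothesis $d_k/M_k\le\varepsilon C_\alpha^{-1}$, monotonicity gives $\indnorm{(f_k-\E_{M_k}f_k)}\le\varepsilon C_\alpha^{-1}\indnorm{(\E_{M_k}(|f_k'|/d_k))}$. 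Applying contraction and then Lemma \ref{fejerrbdd} produces $\indnorm{(\E_{M_k}(|f_k'|/d_k))}\le\indnorm{(|f_k'|/d_k)}=\indnorm{(f_k'/d_k)}\le C_\alpha\indnorm{(f_k)}$, so the two factors $C_\alpha^{\mp1}$ cancel and the error estimate follows. The triangle inequality then gives $\indnorm{(f_k)}\le\indnorm{(\E_{M_k}f_k)}+\varepsilon\,\indnorm{(f_k)}$, whence $\indnorm{(f_k)}\le(1-\varepsilon)^{-1}\indnorm{(\E_{M_k}f_k)}$; since $(1-\varepsilon)^{-1}\le\frac{1-\varepsilon}{1-2\varepsilon}$ for $\varepsilon<\tfrac12$, this is the asserted bound.

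The step I expect to be the main obstacle is the contraction property, namely checking cleanly that coordinatewise conditional expectation does not increase $\indnorm{\cdot}$; once the product-$\sigma$-algebra reformulation is in place this is just conditional Jensen, but it is the one non-pointwise ingredient. Everything else is pointwise domination together with the cancellation of $C_\alpha$ engineered by the hypothesis $d_k\le\varepsilon C_\alpha^{-1}M_k$. A minor verification, should one wish to route the left inequality through the same smoothing estimate rather than directly through contraction, is that $|f_k|$ is admissible in Lemma \ref{schodkapr}: it is Lipschitz with $\bigl||f_k|'\bigr|\le|f_k'|$ almost everywhere.
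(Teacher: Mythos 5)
There is a genuine gap, and it sits exactly at the step you flagged as the main obstacle: the contraction property is false for the norm the lemma is actually about. The statement concerns $\Lnorm{\cdot}$, the square-function norm in which all coordinates are evaluated at the \emph{same} point of $\T$, whereas your argument is written for the independent norm $\indnorm{\cdot}$. Your product-$\sigma$-algebra/conditional-Jensen justification is correct for $\indnorm{\cdot}$, because there each coordinate has its own variable and the coordinatewise expectations assemble into one genuine conditional expectation. For $\Lnorm{\cdot}$ it breaks down: applying \emph{different} conditional expectations $\E_{M_k}$ to the different coordinates of a single vector-valued function of one variable is not a conditional expectation, and it can increase the $L^1\left(\ell^2\right)$ norm. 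For instance, with $g_1=g_2=\mathbbm{1}_{[0,1/2)}$, $M_1=1$, $M_2=2$ one has
\beq \Lnorm{\left(g_1,g_2\right)}=2^{-\frac12}\approx 0.71,\qquad \Lnorm{\left(\E_{M_1}g_1,\E_{M_2}g_2\right)}=\frac{1+5^{\frac12}}{4}\approx 0.81,\eeq
so the norm goes up. Nor can you evade this by running the whole proof in $\indnorm{\cdot}$: there the contraction is valid, but Lemma \ref{fejerrbdd}, which you need to pass from $\left(\left|f_k'\right|/d_k\right)$ back to $\left(f_k\right)$, is stated and proved only for $\Lnorm{\cdot}$ (its proof, via Stein multipliers and Khintchine, lives on a single copy of $\T$) --- and in any case the inequality to be proven is about $\Lnorm{\cdot}$. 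Finally, restricting the contraction claim to the special sequences at hand does not help either: an approximate contraction for sequences of the form $\left(\E_{M_k}\left|h_k\right|\right)$ with $h_k\in H^1_{d_k}(\D)$ is, up to constants, precisely the left-hand inequality of the lemma itself (applied to the derivatives $f_k'/d_k$), so assuming it is circular. Indeed, the factor $1-\varepsilon<1$ in \eqref{eq:discretization} is a visible trace of the fact that exact contraction fails.

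This difficulty is the actual content of the lemma, and the paper's proof is built around it. After Lemma \ref{schodkapr} one is left with $\Lnorm{\left(\E_{M_k}\left|f_k'/M_k\right|\right)}$; instead of discarding the conditional expectation, the paper re-expands it by the triangle inequality, $\E_{M_k}|g|\leq |g|+\left|\left(\id-\E_{M_k}\right)|g|\right|$, and applies Lemma \ref{schodkapr} again to the new error, pushing it onto the next derivative. Induction gives, for every $r$,
\beq \Lnorm{\left(\E_{M_k}\left|\frac{f_k'}{M_k}\right|\right)}\leq \sum_{j=1}^{r-1}\Lnorm{\left(\frac{f_k^{(j)}}{M_k^{j}}\right)}+\Lnorm{\left(\E_{M_k}\left|\frac{f_k^{(r)}}{M_k^{r}}\right|\right)},\eeq
where $f_k^{(j)}$ denotes the $j$-th derivative (still an element of $H^1_{d_k}(\D)$). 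Each summand is at most $\left(\varepsilon C_\alpha^{-1}\right)^{j}C_\alpha^{j}\Lnorm{\left(f_k\right)}=\varepsilon^{j}\Lnorm{\left(f_k\right)}$ by the hypothesis $d_k\leq\varepsilon C_\alpha^{-1}M_k$ and $j$-fold application of Lemma \ref{fejerrbdd}, while the remainder term is handled by a crude $\ell^1$ bound costing $n^{\frac12}$ (this is why one first reduces to finitely many nonzero $f_k$) times $\varepsilon^{r}\to 0$. Summing the geometric series yields the error estimate with constant $\frac{\varepsilon}{1-\varepsilon}$ rather than your $\varepsilon$, which is where the constants $\frac{1}{1-\varepsilon}$ and $\frac{1-\varepsilon}{1-2\varepsilon}$ come from; note also that in the paper \emph{both} inequalities of \eqref{eq:discretization} require this machinery, so the left one is not ``free'' as in your write-up. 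This geometric-series bootstrap is the missing idea that must replace your contraction step.
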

\begin{proof}Clearly it is enough to prove \eqref{eq:discretization} in the case when only finitely many, say $f_1,\ldots,f_n$, of given functions are nonzero. In order to save space, we will omit indices and write $\Lnorm{\left(\cdot\right)}$ instead of $\Lnorm{\left(\cdot\right)_{k=1}^n}$. We will prove by induction with respect to $r$ that \beq \label{eq:indstep}\Lnorm{\left(\E_{M_k}\left|\frac{f_k'}{M_k}\right|\right)}\leq \sum_{j=1}^{r-1}  \Lnorm{\left(\frac{f_k'^{(j)}}{M_k^j}\right)}+ \Lnorm{\left(\E_{M_k}\left|\frac{f_k'^{(r)}}{M_k^{r}}\right|\right)}.\eeq For $r=1$ there is nothing to prove. Let us assume \eqref{eq:indstep} for some $r$. By the pointwise estimates in Lemma \ref{schodkapr} and $\left||f|'\right|\leq \left|f'\right|$, 
\begin{eqnarray} \Lnorm{\left(\E_{M_k}\left|\frac{f_k'^{(r)}}{M_k^{r}}\right|\right)} &\leq & \Lnorm{\left(\frac{f_k'^{(r)}}{M_k^{r}}\right)}+ \Lnorm{\left(\left(\mathrm{id}-\E_{M_k}\right)\left|\frac{f_k'^{(r)}}{M_k^{r}}\right|\right)}\\ &\leq& \Lnorm{\left(\frac{f_k'^{(r)}}{M_k^{r}}\right)}+ \Lnorm{\left(\frac{1}{M_k}\E_{M_k} \left|\left|\frac{f_k'^{(r)}}{M_k^{r}}\right|'\right|\right)}\\ &\leq & \Lnorm{\left(\frac{f_k'^{(r)}}{M_k^{r}}\right)}+ \Lnorm{\left(\E_{M_k} \left|\frac{f_k'^{(r+1)}}{M_k^{r+1}}\right|\right)}, \end{eqnarray} which plugged into \eqref{eq:indstep} gives exactly the same with $r+1$ in place of $r$. Therefore
\begin{eqnarray} \Lnorm{\left(\E_{M_k}\left|\frac{f_k'}{M_k}\right|\right)}
&\leq& \sum_{j=1}^{r-1}  \Lnorm{\left(\frac{f_k'^{(j)}}{M_k^j}\right)}+ \Lnorm{\left(\E_{M_k}\left|\frac{f_k'^{(r)}}{M_k^{r}}\right|\right)}\\ 
\text{(Cauchy-Schwarz)} &\leq & \sum_{j=1}^{r-1}  \Lnorm{\left(\frac{f_k'^{(j)}}{M_k^j}\right)}+ n^\frac{1}{2}\Lnorm{\left(\frac{f_k'^{(r)}}{M_k^{r}}\right)}\\
&\leq& \sum_{j=1}^{r-1} \left(\varepsilon C_\alpha^{-1}\right)^j \Lnorm{\left(\frac{f_k'^{(j)}}{d_k^j}\right)}+ n^\frac{1}{2}\left(\varepsilon C_\alpha^{-1}\right)^r\Lnorm{\left(\frac{f_k'^{(r)}}{d_k^{r}}\right)}\\
\text{(Iterating Lemma \ref{fejerrbdd})} & \leq & \sum_{j=1}^{r-1} \varepsilon^j \Lnorm{\left(f_k\right)}+ n^\frac{1}{2}\varepsilon^r\Lnorm{\left(f_k\right)}\\ &\stackrel{r\to\infty}{\longrightarrow}& \frac{\varepsilon}{1-\varepsilon}\Lnorm{\left(f_k\right)}.
\end{eqnarray}
Combining this with another usage of Lemma \ref{schodkapr} again we get \begin{eqnarray} \Lnorm{\left(\E_{M_k}\left|f_k\right|\right)} &\leq & \Lnorm{\left(f_k\right)}+ \Lnorm{\left(\left(\mathrm{id}-\E_{M_k}\right)\left|f_k\right|\right)}\\ 
&\leq & \Lnorm{\left(f_k\right)}+ \Lnorm{\left(\E_{M_k}\left|\frac{\left|f_k\right|'}{M_k}\right|\right)}\\ &\leq& \Lnorm{\left(f_k\right)}+ \frac{\varepsilon}{1-\varepsilon}\Lnorm{\left(f_k\right)}\\ &=& \frac{1}{1-\varepsilon}\Lnorm{\left(f_k\right)},\end{eqnarray} which proves the left hand side of \eqref{eq:discretization}. Similarly
\begin{eqnarray} \Lnorm{\left(f_k\right)}&\leq & \Lnorm{\left(\E_{M_k}f_k\right)}+ \Lnorm{\left(\left(\mathrm{id}-\E_{M_k}\right)f_k\right)}\\ &\leq & \Lnorm{\left(\E_{M_k}f_k\right)}+ \Lnorm{\left(\E_{M_k}\left|\frac{f_k'}{M_k}\right|\right)}\\ &\leq & \Lnorm{\left(\E_{M_k}f_k\right)}+ \frac{\varepsilon}{1-\varepsilon}\Lnorm{\left(f_k\right)},\end{eqnarray} proving the other inequality. 
\end{proof}
\begin{thm}\label{oldrev}Let $\left(d_k:k\geq 1\right)$, $\left(N_k:k\geq 1\right)$ be sequences of integers such that $d_{k+1}\geq (1+\alpha) d_k$ for some $\alpha>0$ and $d_k\leq \beta N_{k+s}$ for some $\beta>0$ and an integer $s\geq 0$. Then for any $f_k\in H^1_{d_k}(\D)$ the inequality 
\beq \Lnorm{\left(f_k\right)_{k=1}^\infty}\gtrsim \indnorm{\left(\E^*_{N_k}\left|f_k\right|\right)_{k=1}^\infty}\eeq holds with a constatnt dependent only on $s,\alpha, \sup\frac{d_k}{N_{k+s}}$. \end{thm}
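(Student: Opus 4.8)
The plan is to push the whole inequality onto the dyadic filtration and then apply Theorem \ref{dyadicrbdd} to the family $T_k=\E^*_{N_k}$. Fix $\varepsilon=\tfrac14$, put $\beta=\sup_k d_k/N_{k+s}$, and choose $m_k=\lceil\log_2(C_\alpha d_k/\varepsilon)\rceil$, $M_k=2^{m_k}$, so that simultaneously $d_k\le\varepsilon C_\alpha^{-1}M_k$ (the hypothesis of Lemma \ref{discretization}) and $2^{m_k}\le 2C_\alpha d_k/\varepsilon$. The operators $\E^*_{N_k}$ are contractions on $L^1$, giving $C_1=1$ in Theorem \ref{dyadicrbdd}. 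For the smoothing bound \eqref{eq:tkl2} I note that a dyadic interval $I$ admissible for $T_k$ satisfies $|I|\ge 2^{-m_{k-s}}\gtrsim 1/d_{k-s}$, while $1/N_k\le\beta/d_{k-s}$ by hypothesis; hence $2/N_k\lesssim|I|$ and Lemma \ref{enl2} yields $\|\E^*_{N_k}f\|_{L^2}\le(|I|+2/N_k)^{\frac12}\|f\|_{L^2}\lesssim|I|^{\frac12}\|f\|_{L^2}$ with $C_2\simeq(C_\alpha\beta/\varepsilon)^{\frac12}$.

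A minor nuisance is that the $m_k$ need not be strictly increasing when $\alpha$ is small, whereas Theorem \ref{dyadicrbdd} wants strictly increasing exponents. I circumvent this by partitioning the indices into the $L=\lceil 1/\log_2(1+\alpha)\rceil$ residue classes modulo $L$; along each class consecutive $d_k$ grow by a factor $\ge(1+\alpha)^L\ge 2$, so the associated $m_k$ are strictly increasing. Applying Theorem \ref{dyadicrbdd} on each class (with shift $\lceil s/L\rceil$ and the constants above) and recombining by $(\sum_l b_l)^{\frac12}\le\sum_l b_l^{\frac12}$ together with $\Lnorm{(h_k)_{k\in S_l}}\le\Lnorm{(h_k)_k}$, I obtain the master estimate
\[
\indnorm{(\E^*_{N_k}h_k)_k}\lesssim\Lnorm{(h_k)_k}
\]
for every sequence of $\mathcal{F}_{m_k}$-measurable functions $h_k$, with constant depending only on $s,\alpha,\beta$.

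With the master estimate available I split $|f_k|=\E_{M_k}|f_k|+(\id-\E_{M_k})|f_k|$. The main term is immediate: $\E_{M_k}|f_k|$ is $\mathcal{F}_{m_k}$-measurable, so the master estimate and the left inequality of Lemma \ref{discretization} give $\indnorm{(\E^*_{N_k}\E_{M_k}|f_k|)}\lesssim\Lnorm{(\E_{M_k}|f_k|)}\le(1-\varepsilon)^{-1}\Lnorm{(f_k)}$. For the error term I use positivity of $\E^*_{N_k}$ and the pointwise estimate of Lemma \ref{schodkapr} (with $\bigl||f_k|'\bigr|\le|f_k'|$) to bound
\[
\bigl|\E^*_{N_k}(\id-\E_{M_k})|f_k|\bigr|\le\frac{1}{M_k}\E^*_{N_k}\E_{M_k}|f_k'|=\E^*_{N_k}\E_{M_k}\Bigl|\frac{f_k'}{M_k}\Bigr|,
\]
so that monotonicity of $\indnorm{\cdot}$, another application of the master estimate to the $\mathcal{F}_{m_k}$-measurable functions $\E_{M_k}|f_k'/M_k|$, then Lemma \ref{discretization}, the pointwise gain $d_k/M_k\le\varepsilon C_\alpha^{-1}$, and finally Lemma \ref{fejerrbdd} produce $\indnorm{(\E^*_{N_k}(\id-\E_{M_k})|f_k|)}\lesssim\varepsilon(1-\varepsilon)^{-1}\Lnorm{(f_k)}$. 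Adding the two contributions completes the proof, with a constant depending only on $s,\alpha,\beta=\sup_k d_k/N_{k+s}$.

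The hard part is the error term: a direct $L^1$ bound on $\E^*_{N_k}(\id-\E_{M_k})|f_k|$ would only control an $\ell^1$-type sum and lose the square-function structure entirely. It is therefore essential to keep the full $\indnorm{\cdot}$ norm and route the error through the derivative, where the discretization gain $1/M_k\lesssim\varepsilon/d_k$ supplies the decisive small factor $\varepsilon$ via Lemma \ref{fejerrbdd}. The remaining care is purely in tracking how $C_2$, the number of residue classes $L$, and the shift depend on $\alpha$ and $\beta$.
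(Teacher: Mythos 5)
Your proposal is correct and follows essentially the same route as the paper: the same splitting $|f_k|\le \E_{M_k}|f_k|+\left|\left(\id-\E_{M_k}\right)f_k\right|$ with $M_k=2^{m_k}\simeq C_\alpha d_k$, the main term handled by Theorem \ref{dyadicrbdd} (with $C_1=1$, $C_2$ from Lemma \ref{enl2} via $N_k\gtrsim d_{k-s}\gtrsim |I|^{-1}$) plus Lemma \ref{discretization}, and the error term routed through the derivative via Lemma \ref{schodkapr}, Lemma \ref{discretization} and Lemma \ref{fejerrbdd}. The only cosmetic difference is where the residue-class trick enters: the paper uses it upfront to reduce to $\alpha>1$ (so that $m_k$ is strictly increasing), while you apply the same splitting only to the master estimate from Theorem \ref{dyadicrbdd}; both organizations yield the same constants up to the dependence on $s$, $\alpha$, $\sup_k d_k/N_{k+s}$.
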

\begin{proof}Without loss of generality we may assume $\alpha>1$. Indeed, assume the weakened version and let $q>\frac{1}{\alpha}$ be an integer. Then $\left(1+\alpha\right)^q>2$ and thus $d_{k+q}> 2d_k$. For any $r\in\{1,\ldots,q\}$, the sequences $\left(d_{r+kq}:k\geq 0\right)$ and $\left(N_{r+kq}:k\geq 0\right)$ satisfy the assumptions of Theorem \ref{oldrev} with $2$ in place of $\alpha$ and $\beta^q$ in place of $\beta$. Thus \begin{eqnarray} \Lnorm{\left(f_k\right)_{k=1}^\infty}&\geq & q^{-\frac{1}{2}}\sum_{r=1}^q \Lnorm{\left(f_{r+kq}\right)_{k=1}^\infty}\\ &\gtrsim & q^{-\frac{1}{2}}\sum_{r=1}^q \indnorm{\left(\E^*_{N_{r+kq}}\left|f_{r+kq}\right|\right)_{k=1}^\infty}\\ &\geq & q^{-\frac{1}{2}}\indnorm{\left(\E^*_{N_k}\left|f_k\right|\right)_{k=1}^\infty}.\end{eqnarray}
Since now $d_{k+1}>2d_k$, the sequence $m_k=\left\lceil \log_2\left(3C_\alpha d_k\right)\right\rceil$ is increasing.  Also, \beq m_k-1< \log_2\left(3C_\alpha d_k\right)\leq m_k\eeq guarantees $2^{m_k}\simeq d_k$. Let us define operators $T_k$ acting on $L^1[0,1]$ by \beq T_kf= \E^*_{N_k}\left|f\right|.\eeq Take $f$ supported on a dyadic interval $I$ of length $2^{-m}$, an integer $j$ such that $m\leq m_j$ and $k$ such that $k\geq j+s$. Then \beq N_k\gtrsim d_{k-s}\gtrsim 2^{m_{k-s}}\geq 2^{m_j}\geq 2^m= |I|^{-1},\eeq so for any $f\in L^2$, by Lemma \ref{enl2} applied to $|f|$,  \begin{eqnarray} \left\|T_k f\right\|_{L^2}&=& \left\|\E^*_{N_k}\left|f\right|\right\|_{L^2}\\ &\leq& \sqrt{|I|+\frac{2}{N_k}}\|f\|_{L^2}\\ &\lesssim& |I|^\frac{1}{2}\|f\|_{L^2},\end{eqnarray} verifying \eqref{eq:tkl2}. Thus $T_k$ together with $m_k$ satisfy the assumptions of Theorem \ref{dyadicrbdd}. Hence, for $M_k=2^{m_k}$, \begin{eqnarray} \indnorm{\left(\E^*_{N_k}\left|\left(\mathrm{id}-\E_{M_k}\right)f_k\right|\right)}&\leq& \indnorm{\left(\E^*_{N_k}\E_{M_k}\left|\frac{f_k'}{M_k}\right|\right)}\\ &\lesssim& \Lnorm{\left(\E_{M_k}\left|\frac{f_k'}{M_k}\right|\right)}\\ \text{(By Lemma \ref{discretization})}&\simeq& \Lnorm{\left(\frac{f_k'}{M_k}\right)}\\ &\simeq& \Lnorm{\left(\frac{f_k'}{d_k}\right)} \\ \label{eq:prawiekoniec}\text{(By Lemma \ref{fejerrbdd})}&\lesssim& \Lnorm{\left(f_k\right)}.\end{eqnarray} Ultimately, \begin{eqnarray} \indnorm{\left(\E^*_{N_k}\left|f_k\right|\right)}&\leq & \indnorm{\left(\E^*_{N_k}\E_{M_k}\left|f_k\right|\right)}+ \indnorm{\left(\E^*_{N_k}\left|\left(\mathrm{id}-\E_{M_k}\right)f_k\right|\right)}\\  &\lesssim& \Lnorm{\left(\E_{M_k}\left|f_k\right|\right)}+ \Lnorm{\left(f_k\right)}\\ &\lesssim& \Lnorm{\left(f_k\right)}\end{eqnarray} as desired.\end{proof}

\end{document}